\newcommand{\defeq}{\vcentcolon=}
\newtheorem{lemma}{Lemma}
\newcommand{\bo}{\boldsymbol}
\DeclareMathOperator{\atantwo}{atan2}
\newcommand{\overbar}[1]{\mkern1.5mu\overline{\mkern-1.5mu#1\mkern-1.5mu}\mkern1.5mu}
\journal{arxiv}
\begin{document}

\begin{frontmatter}

%% Title, authors and addresses

%% use the tnoteref command within \title for footnotes;
%% use the tnotetext command for theassociated footnote;
%% use the fnref command within \author or \address for footnotes;
%% use the fntext command for theassociated footnote;
%% use the corref command within \author for corresponding author footnotes;
%% use the cortext command for theassociated footnote;
%% use the ead command for the email address,
%% and the form \ead[url] for the home page:
%% \title{Title\tnoteref{label1}}
%% \tnotetext[label1]{}
%% \author{Name\corref{cor1}\fnref{label2}}
%% \ead{email address}
%% \ead[url]{home page}
%% \fntext[label2]{}
%% \cortext[cor1]{}
%% \affiliation{organization={},
%%             addressline={},
%%             city={},
%%             postcode={},
%%             state={},
%%             country={}}
%% \fntext[label3]{}
\title{Nonlinear Optimal Guidance for Impact Time Control with Field-of-View Constraint}
% \author{}
\author{Fangmin Lu\footnote{PhD student, School of Aeronautics and Astronautics.},
  Zheng Chen\footnote{Researcher, School of Aeronautics and Astronautics;
    email: \underline{z-chen@zju.edu.cn}. Member AIAA.},
    Kun Wang\footnote{PhD, School of Aeronautics and Astronautics.}.
}

\affiliation{organization={Zhejiang University},
            addressline={Zheda Road 38},
            city={Hangzhou},
            postcode={310027},
            state={Zhejiang},
            country={China}}

% \title{}

%% use optional labels to link authors explicitly to addresses:
%% \author[label1,label2]{}
%% \affiliation[label1]{organization={},
%%             addressline={},
%%             city={},
%%             postcode={},
%%             state={},
%%             country={}}
%%
%% \affiliation[label2]{organization={},
%%             addressline={},
%%             city={},
%%             postcode={},
%%             state={},
%%             country={}}

% \affiliation{organization={},%Department and Organization
%             addressline={}, 
%             city={},
%             postcode={}, 
%             state={},
%             country={}}
% \title{Nonlinear Optimal Guidance for Impact Time Control with Field-of-View Constraint}
% \author{}
\begin{abstract}
%% Text of abstract
An optimal guidance law for impact time control with field-of-view constraint is presented. The guidance law is derived by first converting the inequality-constrained nonlinear optimal control problem into an equality-constrained one through a saturation function. Based on Pontryagin's maximum principle, a parameterized system satisfying the necessary optimality conditions is established. By propagating this system, a large number of extremal trajectories can be efficiently generated. These trajectories are then used to train a neural network that maps the current state and time-to-go to the optimal guidance command. The trained neural network can generate optimal commands within 0.1 milliseconds while satisfying the field-of-view constraint. Numerical simulations demonstrate that the proposed guidance law outperforms existing methods and achieves nearly optimal performance in terms of control effort.
\end{abstract}
%%Research highlights
% \begin{highlights}
% \item A neural network-based impact time control guidance with field-of-view constraint.
% \item A parameterized system for generating extremal trajectories is established.
% \item Smoothness of the guidance command is guaranteed.
% \item Time-to-go estimation is not required.
% \end{highlights}

\begin{keyword}
Impact time control guidance; Field-of-view constraint; Neural network; Nonlinear optimal control
\end{keyword}

\end{frontmatter}

%% \linenumbers

%% main text
\section{Introduction}\label{section:1}
Since the 1950s, Proportional Navigation (PN) has been widely applied to designing guidance laws owing to its simplicity, effectiveness, and ease of implementation~\cite{zarchan2012tactical}. However, it is well known that PN cannot control the duration of pursuing process, which makes it difficult for multiple pursers to cooperatively achieve a target. For this reason, significant effort has been devoted to designing Impact Time Control Guidance (ITCG), and papers related to ITCG have been increasingly published in recent decades; see, e.g.,~\cite{Jeon:2006,jeon2016impact,2016Modified, chen2019modified, DongVarying, kumar2013sliding,Cho2016Nonsingular,2018Nonsingular,hu2019sliding, Lyapunov2016, tekin2017polynomial, tekin2017adaptive, tsalik2019circular, zhang2014impact, kim2016impact,singh2024time, Dong2022article}. 

Jeon I.-S. {\it et al.} in~\cite{Jeon:2006} were probably among the first to study the ITCG; the impact time was adjusted by adding a biased term relative to the impact time error to the PN. Hence, the estimation of time-to-go was required, and the authors in~\cite{Jeon:2006} derived the time-to-go under the assumption that the kinematics is linear. In the subsequent paper~\cite{jeon2016impact}, a natural extension to the estimation of time-to-go was presented by using nonlinear kinematics. In order to precisely control the impact time in scenarios with large initial heading errors, the pure PN was modified in~\cite{2016Modified} by Cho {\it et al.} by using a time-varying gain, for which a closed form of time-to-go was derived in the nonlinear setting.
Recently, Dong {\it et al.}~\cite{DongVarying} proposed a varying-gain PN to control the impact time and extended its applications to three-dimensional engagement scenarios.

In addition to the PN-based ITCG, some other methods have been developed in the context of nonlinear control theory, primarily categorized as Sliding-Mode-Control (SMC)-based~\cite{kumar2013sliding,Cho2016Nonsingular,2018Nonsingular,hu2019sliding} and Lyapunov-based~\cite{Lyapunov2016} guidance laws. Kumar {\it et al.}~\cite{kumar2013sliding} chose the sliding surface corresponding to the Line-of-Sight (LOS) rate and the impact time error, and then derived the guidance law based on conventional and improved estimation of time-to-go. Through introducing the lead angle into a Lyapunov candidate function,  Saleem {\it et al.}~\cite{Lyapunov2016} derived the impact time in terms of a Beta function of the initial heading error and initial range. The impact time can then be controlled by varying a single parameter.
A common limitation of guidance laws such as those in~\cite{kumar2013sliding,Cho2016Nonsingular,Lyapunov2016} is the need for a large initial guidance command, primarily due to significant errors in time-to-go estimation or lead angle. This issue results in the requirement of more control effort and degradation of the performance.

Alternatively, trajectory shaping provides another approach for designing ITCG laws. In \cite{tekin2017polynomial}, the quadratic and cubic polynomials were employed to follow the look-angle profile without relying on time-to-go estimation. The coefficients of the polynomials were determined through an integration equation. Subsequently, the methodology introduced in \cite{tekin2017polynomial} was extended to scenarios with varying speed, as detailed in \cite{tekin2017adaptive}. The main idea of the latter work was to employ an $n$th-order polynomial to approximate the lead angle. Under the assumption of the linearized setting, an analytical solution of the guidance gain was derived as a function of the range, the lead angle, and time-to-go. Whereas in \cite{tsalik2019circular}, the impact time control was achieved by imposing the curvature of a circular trajectory. The downside of trajectory shaping methods presented in~\cite{tekin2017polynomial,tekin2017adaptive,tsalik2019circular} 
is the lack of optimality.
% is that they commonly need to solve 
% a set of nonlinear equations, which may bring convergence issues.

In practical applications, the pursuers are subject to the Field-of-View (FOV) 
constraint. Violating the FOV constraint can lead to a loss of target lock, which may  further result in interception failure. Consequently, recent researches on ITCG have been focused on incorporating the FOV constraint~\cite{zhang2014impact,kim2016impact,2018Nonsingular,singh2024time,tekin2017polynomial,Dong2022article,kang2024impact}. In \cite{zhang2014impact}, a rule involving the cosine of the weighted lead angle was used in the biased term, guaranteeing satisfaction of the FOV constraint. While in \cite{kim2016impact}, the authors formulated the biased term by applying the backstepping control theory. Specifically, they controlled the relative velocity of the target and the pursuer along the LOS to satisfy the FOV constraint. In addition to the PN-based guidance laws, Chen {\it et al.}~\cite{2018Nonsingular} presented an SMC-based ITCG law under the FOV limit. In that work, a virtual variable of the desired lead angle was introduced by solving an algebraic-trigonometric equation. Then, the error between the actual lead angle and the desired one was defined as the sliding surface. The guidance law was further extended to incorporating the FOV constraint by redefining the sliding surface with a switch logic, rendering discontinuity at the exit of the constrained arc. Singh {\it et al.}~\cite{singh2024time} employed a barrier Lyapunov function to derive the guidance law satisfying the FOV bound with a first-order autopilot dynamics. In \cite{tekin2017polynomial}, the range was formulated as a polynomial with respect to time, and the guidance law had a switch logic to meet the FOV constraint. However, this method may generate a discontinuous guidance command at the switching instant. It is worth mentioning that a united method for FOV-limited guidance was proposed in \cite{Dong2022article}. The authors showed that the FOV limit can be converted to a biased command, and then they employed this method to 2D and 3D scenarios with impact time and angle control.

Although the guidance laws mentioned in the preceding paragraph could effectively handle the FOV constraint, ensuring the optimality of the control effort still remains a challenge. Indeed, PN with a navigation gain of three proves to be optimal under the linearized setting and certain ancillary assumptions, such as linear approximation of the LOS angle~\cite{Bryson:69}. However, PN could suffer severe optimality degradation given a large initial heading error. It has been demonstrated that the incorporation of the nonlinear setting into PN-based guidance laws results in enhanced guidance performance~\cite{2016Modified,chen2019nonlinear,Jeon:10}. This motivates researchers to introduce nonlinear optimal control problems (OCPs) in guidance law design. However, conventional methods for solving nonlinear OCPs, such as direct and indirect methods, suffer from convergence issues and computational burdens~\cite{Betts1998}. Consequently, their applications in real-time pursuer guidance are deemed impractical.

To address these challenges, recent advancements have explored the use of Neural Networks (NNs) to formulate feedback optimal guidance within the aerospace engineering domain~\cite{you2021learning,wang2024fuel,Izzo2021,wang2022nonlinear, wang2024real}. Notably, Wang {\it et al.} proposed a nonlinear optimal guidance law with impact time constraint in \cite{wang2022nonlinear}. In that work, the authors established a parameterized system satisfying the necessary conditions according to Pontryagin's Maximum Principle (PMP), and then an NN was trained by the dataset generated from the parameterized system to map the state and time-to-go to the optimal guidance command.

In this paper a novel method for generating feedback solutions of a nonlinear OCP with impact-time and FOV constraints is proposed. First, the interception problem is built as an inequality-constrained nonlinear optimal control problem (ICNOCP), and saturation functions proposed by Graichen {\it et al.}~\cite{graichen2010handling} is employed to convert the ICNOCP into an equality-constrained nonlinear optimal control problem (ECNOCP). Next, the PMP is employed to derive the necessary optimality conditions for the ECNOCP. Then, the parameterization method in \cite{wang2022nonlinear} is used to parameterize the extremal trajectories. By doing so, a large number of extremal trajectories, containing the current state of the pursuer and the corresponding optimal guidance command, can be readily obtained. Furthermore, a neural network trained by the sampled data, is
sufficient to generate the optimal guidance command within a constant time.

The remainder of this paper is organized as follows. In Section~\ref{section:2}, the interception geometry and the optimal control problem are formulated. Section~\ref{section:3} derived the necessary conditions and establish a parameterized system. The dataset generation and the neural network training are presented in Section~\ref{section:4}. In Section~\ref{section:5}, the numerical results are provided to evaluate the performance of the proposed guidance law. Finally, this paper concludes in Section~\ref{section:6}.

\section{Problem Formulation}\label{section:2}
\subsection{Nonlinear Kinematics}
Consider a two-dimensional interception geometry with a stationary target, as shown
in Fig.~\ref{fig:frame}. The origin of the Cartesian inertial frame $Oxy$
is located at the target. The x-axis points to the East, and the y-axis points to the North. Denote by $(x,y)$ the position of the pursuer in the frame $Oxy$, and denote by $\bo{V}$ the velocity vector.  
\begin{figure}[thpb]
  \centering
  % \framebox{\parbox{3in}{ }}
  \includegraphics[scale=1]{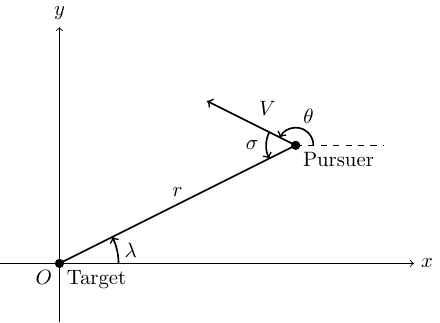}
  \caption{Geometry of a two-dimensional interception.}\label{fig:frame}
\end{figure}
Through normalizing the speed to 1, 
the nonlinear kinematics of the pursuer can be expressed by~\cite{Lu:06}
\begin{align}
&\left\{
  \begin{aligned}
    \dot{x}(t)    &= \cos\theta(t), \\
    \dot{y}(t)     &= \sin\theta(t), \\
    \dot{\theta}(t) & = u(t),
  \end{aligned}
\right.
  \label{eq:kinematics}
\end{align}
where $t$ denotes the time, the over dot denotes the differentiation with respect to time, $\theta$ the heading angle measured counter-clockwise from the x-axis to the velocity vector $\bo{V}$, and $u$ the control variable representing the normal acceleration.
Let $\sigma$ be the lead angle measured clock-wise from the Line of Sight (LOS) to the velocity vector $\bo{V}$, as shown in Fig.~\ref{fig:frame}. It is obvious that the lead angle $\sigma$ can be expressed as
\begin{align}
  \sigma & = \pi + \lambda - \theta,
\end{align}
where $\lambda$ is the LOS angle measured counter-clockwise from the x-axis to the LOS.
During the course for the pursuer to arrive the target, it is required that the target remains in the Field of View (FOV). Thus, the lead angle should be bounded as below
\begin{equation}
  \sigma  \in [-\sigma_M, \sigma_M],
  \label{eq:fov_limit}
\end{equation}
where $\sigma_M\in\left(0,\pi/2 \right]$ denotes the FOV limit.
As the origin of the frame $Oxy$ is located at the target, the final boundary conditions are given as
\[x(t_f) = 0, \ y(t_f) = 0,\]
where $t_f$ is the final time. 

\subsection{Optimal Control Problem}
The optimal control problem consists of steering the kinematics in Eq.~(\ref{eq:kinematics}) from an initial state $(x_0,y_0,\theta_0)$ at $t=0$ to impact the target $(0,0)$ at a fixed terminal time $t_f$ so that the FOV constraint in Eq.~(\ref{eq:fov_limit}) is met and the control effort is minimized. Let $\bo{z}=(x,y,\theta)$ denote the state vector, then the OCP can be formulated as
\begin{align}\label{eq:OCP}
\mathcal{P}_0:
  \begin{cases}
     & \underset{u}{\min}~\displaystyle \int_{0}^{t_f} \frac{1}{2} u^2 \mathrm{d}t \\
     & \rm{s.t.}                                       \\
     & \dot{\bo{z}} = \bo{f}(\bo{z}),                  \\
     & -\sigma_M \leq \sigma \leq \sigma_M,            \\
     & x(0) = x_0,\ y(0) = y_0,\ \theta(0) = \theta_0,
     \\
     & x(t_f) = 0,\ y(t_f) = 0,
  \end{cases}
\end{align}
where $\bo{f}(\cdot)$ indicates the right hand side of the nonlinear kinematics equations in Eq.~(\ref{eq:kinematics}). 

Due to the FOV constraint, the extremal trajectory may consist of constraint-active and -inactive arcs, making it challenging to know {\it a priori} the structure of extremal trajectories~\cite{malisani2016interior}. In the following paragraphs, we shall introduce a saturation function, so that $\mathcal{P}_0$ in Eq.~(\ref{eq:OCP}) is transformed to an equality-constrained OCP. 

According to the engagement geometry in Fig.~\ref{fig:frame}, we have 
\begin{align}
  \begin{split}
    \cos \sigma &=  -\frac{x(t)\cos\theta(t)+y(t)\sin\theta(t)}{\sqrt{{x(t)}^2+{y(t)}^2}}.
  \end{split}
\end{align}
Because of $\cos\sigma \leq 1$, we can rewrite the constraint in Eq.~(\ref{eq:fov_limit}) as 
\begin{align}
  S(\bo{z}) \defeq \cos\sigma_M -  \cos\sigma \leq 0.
  \label{eq:fov_limit_s}
\end{align}
Differentiating the constraint with respect to time leads to
\begin{align}
  \dot{S}(\bo{z}) = \Big(\frac{\sin\sigma}{r} - u \Big)\sin\sigma,
  \label{eq:Sdot}
\end{align}
Notice from Eq.~(\ref{eq:Sdot}) that the control variable appears in the first time deriative of $S(\bo{z})$. Thus, the FOV constraint is of order one. Then, according to~\cite{graichen2010handling}, the inequality constraint can be replaced by 
\begin{align}
    S(\bo{z}) - \psi(\xi) = 0,
    \label{eq:satfun}
\end{align}
where $\psi(\xi) = -\exp(-\xi)$ with $\xi$ being a newly introduced state variable.

Let us denote by $\omega$ the time derivative of $\xi$, i.e., 
\begin{align}
    \dot{\xi} = \omega.
    \label{eq:xidot}
\end{align}
Then, $\omega$ is a new control variable.
Combining Eq.~(\ref{eq:satfun}) with Eq.~(\ref{eq:xidot}), we immediately have 
\begin{align}
    \dot{S}(\bo{z}) = \psi^{\prime}(\xi)\omega,
\end{align}
where the prime denotes the differentiation with respect to $\xi$.
Since the original cost in $\mathcal{P}_0$ is independent of $\omega$,
an additional regularization term should be added. The new regularized cost is as follows:
\begin{align}
  J(u,\omega, \varepsilon) = \frac{1}{2} \int_{0}^{t_f} (u^2 + \varepsilon \omega^2) \mathrm{d}t,
  \label{eq:cost_reg}
\end{align}
where $\varepsilon < 1$ is an adjustable parameter.

The original inequality-constrained $\mathcal{P}_0$ in Eq.~(\ref{eq:OCP}) is reformulated as
an equality-constrained $\mathcal{P}_{\varepsilon}$ as follows:
\begin{align}\label{eq:OCPe}
\mathcal{P}_{\varepsilon}:
  \begin{cases}
     & \underset{u,\omega}{\min} ~\displaystyle\int_{0}^{t_f} \frac{1}{2}(u^2 + \varepsilon \omega^2) \mathrm{d}t \\
     & \rm{s.t.}                                                                                     \\
     & \dot{\bo{z}} = \bo{f}(\bo{z}),                                                                     \\
     & \dot{\xi} = \omega,                                                                           \\
     & x(0) = x_0,\ y(0) = y_0,\ \theta(0) = \theta_0,
     \\
     & x(t_f) = 0,\ y(t_f) = 0,
     \\
     & S(\bo{z}_0) = \psi(\xi_0),                                                                    \\
     & S(\bo{z}_f) = \psi(\xi_f),\\
    &\dot{S}(\bo{z}) -\psi'\omega= 0.
  \end{cases}
\end{align}

The solution of $\mathcal{P}_{\varepsilon}$ converges to that of $\mathcal{P}_0$ as $\varepsilon$ approaches to zero. Readers interested in the corresponding proof are referred to~\cite{graichen2010handling}. As a result, it amounts to addressing $\mathcal{P}_{\varepsilon}$, and the solution of $\mathcal{P}_{\varepsilon}$ will be characterized in the following section. 

\section{Parameterization of the Optimal Solution}\label{section:3}

\subsection{Necessary Conditions}\label{subsec:necessary_conds}
Let $p_x$, $p_y$, $p_\theta$, and $p_\xi$ be the costate variables of $x$, $y$, $\theta$, and $\xi$, respectively. Then, the Hamiltonian is expressed as
\begin{align}
  % \begin{split}
    \mathcal{H} &= p_x \cos\theta + p_y\sin\theta + u p_\theta + p_\xi \omega 
    - \frac{1}{2}u^2 - \frac{1}{2}\varepsilon \omega^2
    + \mu\left[\dot{S}(\bo{z}) - \psi'(\xi)\omega\right],
  % \end{split}
\end{align}
where $\mu$ denotes the Lagrange multiplier.
The costate variables are governed by
\begin{align}
&\left\{
\begin{aligned}
\label{eq:euler}
\dot{p}_x = - \frac{\partial \mathcal{H}}{\partial x}  &= -\mu \frac{\sin\sigma(r\sin\theta - 2x\sin\sigma)}{r^3}, \\
\dot{p}_y  = - \frac{\partial \mathcal{H}}{\partial y}&= -\mu \frac{\sin\sigma(-r\cos\theta - 2y\sin\sigma)}{r^3}, \\
\dot{p}_\theta  = - \frac{\partial \mathcal{H}}{\partial \theta}&= p_x\sin\theta - p_y\cos\theta - \mu \frac{\sin\sigma(x\cos\theta + y\sin\theta)}{r^2},\\
\dot{p}_\xi = - \frac{\partial \mathcal{H}}{\partial \xi} & = \mu\omega \exp \xi,
\end{aligned}
\right.
\end{align}
where $\sin\sigma = (x\sin\theta-y\cos\theta)/r$.
According to the PMP~\cite{Pontryagin}, we have
\begin{align}
&\left\{
\begin{aligned}
\label{eq:age}
  0 = \frac{\partial \mathcal{H}}{\partial u} & = p_\theta - u  + \mu\sin\sigma \\ 
  0 = \frac{\partial \mathcal{H}}{\partial \omega} &= p_\xi - \varepsilon \omega - \mu \exp \xi ,\\
  \begin{split}
    0 = \frac{\partial \mathcal{H}}{\partial \mu} &= \sin\sigma\left(\frac{\sin\sigma}{r}-u\right) - \omega \exp\xi.
    % \label{eq:age3}
  \end{split}
  \end{aligned}
  \right.
\end{align}

Hereafter, the triple $(x(\cdot),y(\cdot),\theta(\cdot))$ on $[0,t_f]$ will be said an extremal trajectory if it is a solution of Eqs.~(\ref{eq:kinematics},\ref{eq:xidot},\ref{eq:euler}) with the necessary conditions in Eq.~(\ref{eq:age}) satisfied. Accordingly, the control along an extremal trajectory will be said an extremal control.
% Since the heading angle $\theta$ is free at the final time, $\xi(t_f) = S^{-1}(\psi(\bo{z}(t_f))$ is also free.
% the transversality condition implies
Since the heading angle $\theta$ and the new introduced state $\xi$ are free at the final time, it follows from the transverality condition that
\begin{align}
&\left\{
  \begin{aligned}
    \label{eq:trans}
  &p_\theta(t_f)  = 0,\\
  &p_\xi(t_f)  = 0.
  \end{aligned}
\right.
\end{align}

\subsection{Additional Necessary Conditions}
Besides the necessary conditions derived in the previous subsection, we can establish an additional optimality conditions through Lemma~\ref{le:align}.
% \textcolor{red}{
\begin{lemma}\label{le:align}
Given any extremal trajectory $(x(\cdot),y(\cdot),\theta(\cdot))$ on $[0,t_f]$,
  if there exists a time ${\hat{t}}\in (0,t_f)$ so that the velocity vector
  is collinear with the LOS, i.e.,
  \begin{align}
|\cos\sigma(\hat{t})| = \frac{|x(\hat{t})\cos(\theta(\hat{t})) + y(\hat{t})\sin(\theta(\hat{t}))|}{\sqrt{{x(\hat{t})}^2 + {y(\hat{t})}^2}} = 1,
    \label{EQ:cos_sigma=1}
  \end{align}
  then the extremal trajectory  $(x(\cdot),y(\cdot),\theta(\cdot))$ on $[0,t_f]$ is not an optimal trajectory.
\end{lemma}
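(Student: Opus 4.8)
The plan is to argue by contradiction, exploiting the reflection symmetry of $\mathcal{P}_{\varepsilon}$ about the instantaneous pursuer--target line. First I would pin down the geometry: along any feasible trajectory of $\mathcal{P}_{\varepsilon}$ the equality constraints in Eq.~(\ref{eq:OCPe}) give $S(\bo z)=\psi(\xi)=-\exp(-\xi)<0$, i.e. $\cos\sigma>\cos\sigma_M\ge 0$, so $\cos\sigma(\hat t)=-1$ is impossible and $|\cos\sigma(\hat t)|=1$ must mean $\sigma(\hat t)=0$ --- the velocity points exactly at the target. Write $P:=(x(\hat t),y(\hat t))$, $\bar\theta:=\theta(\hat t)$ (the heading from $P$ toward $O$), and let $R$ denote reflection of the plane across the line $OP$. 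I would then record the key structural fact: the kinematics in Eq.~(\ref{eq:kinematics}), the FOV/saturation relations (since $\cos\sigma$, hence $\xi$ and $\omega$, is $R$-invariant), the boundary conditions, and the cost in Eq.~(\ref{eq:cost_reg}) are all invariant under the transformation that reflects positions by $R$ and sends $u\mapsto -u$; consequently the $R$-image of any extremal is again a feasible extremal with the same cost.

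Next, assuming the given extremal $\gamma=(x,y,\theta)$ were optimal, I would split it at $\hat t$ into $\gamma_1$ on $[0,\hat t]$ and $\gamma_2$ on $[\hat t,t_f]$. Since $P$ lies on the axis $OP$ and $\bar\theta$ is parallel to it, the concatenation $\gamma'$ --- follow $\gamma_1$ on $[0,\hat t]$, then $R\gamma_2$ on $[\hat t,t_f]$ --- is state-continuous, feasible for $\mathcal{P}_{\varepsilon}$, and has the same cost as $\gamma$, hence is itself optimal and must obey the necessary conditions of Section~\ref{subsec:necessary_conds}. In particular its control $u=p_\theta+\mu\sin\sigma$ is continuous (each of $p_\theta$, $\sigma$, $\mu$ is continuous along an extremal), whereas by construction the control of $\gamma'$ is that of $\gamma$ on $[0,\hat t]$ and its negative on $[\hat t,t_f]$; continuity at $\hat t$ then forces $u(\hat t)=0$, and hence $p_\theta(\hat t)=0$ by Eq.~(\ref{eq:age}) (using $\sin\sigma(\hat t)=0$) and $\dot\sigma(\hat t)=\sin\sigma(\hat t)/r(\hat t)-u(\hat t)=0$.

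Finally I would close the argument by looking at the costate of $\gamma'$ at $\hat t$: along $R\gamma_2$ it is the $R$-image of that of $\gamma$, namely $(R(p_x,p_y)(\hat t),\,0,\,p_\xi(\hat t))$, while continuity of the costate of the extremal $\gamma'$ forces this to coincide with the terminal costate of $\gamma_1$. Using that the costate along a fixed extremal state trajectory is determined --- which one reads off from the extremal system of Eqs.~(\ref{eq:euler},\ref{eq:age}) --- this equals $(p_x,p_y)(\hat t)$ as well, so $(p_x,p_y)(\hat t)$ must be $R$-invariant, i.e. parallel to $OP$. Then the whole state--costate of $\gamma$ at $\hat t$ is $R$-invariant, so $\gamma$ equals its own $R$-image, lies on the line $OP$, and satisfies $\sigma\equiv 0$; that makes $\gamma$ the straight flight from $(x_0,y_0)$ to $O$ with $t_f=r_0$, which is excluded in the nontrivial impact-time setting $t_f>r_0$. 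Hence no interior $\hat t$ with $|\cos\sigma(\hat t)|=1$ can occur on an optimal trajectory. The easy parts are the feasibility reduction, the symmetry, and the deduction $u(\hat t)=p_\theta(\hat t)=0$; the hard part --- and the place where care is needed --- is turning the degeneracy at $\hat t$ into a contradiction, which rests on the uniqueness of the extremal costate along a given state trajectory (one could instead substitute a local scaling comparison: near $\hat t$ the pursuer aims at the target with vanishing turn rate, and replacing a short arc of $\gamma$ by the minimum-effort arc that reaches $O$ from $\gamma_1(\hat t-\delta)$ in the remaining time costs an order in $\delta$ strictly below that of the removed piece), together with ruling out the degenerate straight-line case via $t_f>r_0$.
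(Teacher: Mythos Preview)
Your approach coincides with the paper's: reflect the tail of the trajectory across the pursuer--target line at the instant $\hat t$ of collinearity, observe that the concatenated trajectory is feasible with identical cost, and then invoke continuity of the extremal control to derive a contradiction. The paper's proof is in fact briefer than yours: it simply asserts that ``in general cases $u(\hat t)\neq 0$'', so the concatenated control is discontinuous at $\hat t$, and stops there.

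Where you go beyond the paper is in attempting to dispose of the degenerate case $u(\hat t)=0$, which the paper leaves open. Your deduction $u(\hat t)=p_\theta(\hat t)=0$ and $\dot\sigma(\hat t)=0$ is fine, but the step that follows---forcing $(p_x,p_y)(\hat t)$ to be $R$-invariant via ``the costate along a fixed extremal state trajectory is determined''---is not generally valid: distinct costate lifts over the same state arc are not excluded by Eqs.~(\ref{eq:euler},\ref{eq:age}) alone, and the paper provides no uniqueness result you could cite. Your fallback local-scaling comparison is plausible in spirit but, as stated, is too loose to count as a proof (you would need to quantify the cost of the removed arc versus the replacement and show a strict gain for small $\delta$). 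If you want a clean closure, a more direct route is to note that $\sigma(\hat t)=0$, $\dot\sigma(\hat t)=0$ together with the ODE for $\sigma$ and the smoothness of the extremal control make $\sigma\equiv 0$ on a neighborhood (and hence globally), reducing to the straight-line case you already rule out via $t_f>r_0$; this avoids the costate-uniqueness issue entirely.
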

\noindent The proof for Lemma~\ref{le:align} is postponed to \ref{appendix:A}.

\subsection{Parameterization of Extremal Trajectories}
Set $\bo{g} = {[ \frac{\partial \mathcal{H}}{\partial u},~\frac{\partial \mathcal{H}}{\partial \omega},~\frac{\partial \mathcal{H}}{\partial \mu}]}^T$, $\bar{\bo{u}} = {[u, \omega, \mu]}^T$, $\overbar{\bo{z}} = {[x,y,\theta, \xi]}^T$, $\bar{\bo{p}} = {[p_x,~p_y,~p_\theta, p_\xi]}^T$, and $\bar{\bo{f}} = {[\dot{f}^T, \omega]}^T$. We have the following lemma.
\begin{lemma}\label{lemma:jacobian}
    The Jacobian $\dfrac{\partial \bo{g}}{\partial {\bar{\bo{u}}}}$ is non-singular.
\end{lemma}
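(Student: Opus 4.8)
The plan is to prove the claim by a direct computation that reduces it to showing one scalar — the determinant — stays bounded away from zero. Reading off the components of $\bo{g}=\big[\tfrac{\partial\mathcal{H}}{\partial u},\,\tfrac{\partial\mathcal{H}}{\partial\omega},\,\tfrac{\partial\mathcal{H}}{\partial\mu}\big]^{T}$ from Eq.~\eqref{eq:age}, each component is affine in $(u,\omega,\mu)$, so $\partial\bo{g}/\partial\bar{\bo{u}}$ is the matrix
\[
\frac{\partial\bo{g}}{\partial\bar{\bo{u}}}=
\begin{pmatrix}
-1 & 0 & \sin\sigma\\
0 & -\varepsilon & -\exp\xi\\
-\sin\sigma & -\exp\xi & 0
\end{pmatrix},
\]
whose entries depend only on $\sigma$, $\varepsilon$ and $\xi$ (the $r$-dependent term in $\tfrac{\partial\mathcal{H}}{\partial\mu}$ is constant in $\bar{\bo{u}}$ and drops out). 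A cofactor expansion along the first row gives $\det\!\big(\partial\bo{g}/\partial\bar{\bo{u}}\big)=\exp(2\xi)-\varepsilon\sin^{2}\sigma$, so it suffices to show this quantity is nonzero along every extremal.

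The key step is the \emph{a priori} bound $\xi\ge 0$ on any extremal trajectory, which comes from the structure of $\mathcal{P}_{\varepsilon}$. Differentiating $S(\bo{z})-\psi(\xi)$ along the dynamics and using $\dot\xi=\omega$ together with the path constraint $\dot S(\bo{z})-\psi'(\xi)\omega=0$ shows that $S(\bo{z}(t))-\psi(\xi(t))$ is constant on $[0,t_f]$; the boundary condition $S(\bo{z}_0)=\psi(\xi_0)$ forces this constant to vanish, so $S(\bo{z}(t))=\psi(\xi(t))$, i.e.
\[
\exp(-\xi(t))=\cos\sigma(t)-\cos\sigma_M \qquad \text{for all } t\in[0,t_f].
\]
Since $\cos\sigma\le 1$ and $\sigma_M\in(0,\pi/2]$ implies $\cos\sigma_M\ge 0$, the right-hand side lies in $(0,1]$; hence $\exp(-\xi)\le 1$, that is, $\xi\ge 0$.

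Combining the two steps finishes the argument: $\xi\ge 0$ gives $\exp(2\xi)\ge 1$, while $\varepsilon\in(0,1)$ and $\sin^{2}\sigma\le 1$ give $\varepsilon\sin^{2}\sigma\le\varepsilon<1$, so
\[
\det\!\left(\frac{\partial\bo{g}}{\partial\bar{\bo{u}}}\right)=\exp(2\xi)-\varepsilon\sin^{2}\sigma\ \ge\ 1-\varepsilon\ >\ 0 ,
\]
and the Jacobian is non-singular — indeed uniformly so along extremals, which is precisely what is needed to apply the implicit function theorem in the parameterization developed in the next subsection. I expect the only non-mechanical step to be the identification $S(\bo{z}(t))\equiv\psi(\xi(t))$ and the resulting bound $\xi\ge 0$; the Jacobian entries, the $3\times 3$ determinant, and the final inequality are then routine.
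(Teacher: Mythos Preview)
Your overall plan --- compute the $3\times 3$ Jacobian and show its determinant is strictly positive --- is exactly the paper's route, but your matrix has two sign slips that make the argument longer than it needs to be. If you differentiate the Hamiltonian directly (with $\psi'(\xi)=\exp(-\xi)$ and $\dot S$ containing the term $-u\sin\sigma$), you get
\[
\frac{\partial\bo{g}}{\partial\bar{\bo{u}}}
=
\begin{pmatrix}
-1 & 0 & -\sin\sigma\\
0 & -\varepsilon & -\exp(-\xi)\\
-\sin\sigma & -\exp(-\xi) & 0
\end{pmatrix},
\]
which is exactly the matrix in the paper's proof. A quick sanity check flags your version: $\partial\bo{g}/\partial\bar{\bo{u}}$ is the Hessian of $\mathcal{H}$ in $(u,\omega,\mu)$ and must be symmetric, whereas your $(1,3)$ and $(3,1)$ entries have opposite signs. (The asymmetry and the $\exp\xi$ in place of $\exp(-\xi)$ both trace back to typos in the displayed Eq.~\eqref{eq:age}; the appendix uses the corrected expressions.)

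With the correct entries the determinant is
\[
\det\!\left(\frac{\partial\bo{g}}{\partial\bar{\bo{u}}}\right)=\exp(-2\xi)+\varepsilon\sin^{2}\sigma>0
\]
for \emph{all} $\xi\in\mathbb{R}$, $\sigma\in\mathbb{R}$, $\varepsilon>0$ --- a sum of a strictly positive term and a nonnegative one. That is the paper's entire proof: no a~priori bound on $\xi$ is needed. Your additional step, deducing $S(\bo{z}(t))\equiv\psi(\xi(t))$ from the path constraint and the initial matching and hence $\xi\ge 0$, is a correct and pleasant observation, but it only became necessary because the sign slips turned the determinant into the difference $\exp(2\xi)-\varepsilon\sin^{2}\sigma$. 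Once the Jacobian is fixed, you can drop that detour and the proof collapses to the two-line computation the paper gives.
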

\noindent The proof of Lemma~\ref{lemma:jacobian} is postponed to \ref{appendix:A}.
As a result of Lemma~\ref{lemma:jacobian}, $\dfrac{\mathrm{d}\bar{\bo{u}}}{\rm{d}t}$ can be expressed as
\begin{align}
  \frac{\mathrm{d}\bar{\bo{u}}}{\mathrm{d}t} = -{\Big(\frac{\partial \bo{g}}{\partial {\bar{\bo{u}}}}\Big)}^{-1}\Big(  \frac{\partial \bo{g}}{\partial {\overbar{\bo{z}}}}\bar{\bo{f}} - \frac{\partial \bo{g}}{\partial {\bar{\bo{p}}}} \frac{\partial \mathcal{H}}{\partial {\overbar{\bo{z}}}}+\frac{\partial \bo{g}}{\partial t}\Big).
\end{align}
Define a new variable $\tau = t_f - t$, then $\mathrm{d}\tau/\mathrm{d}t = -1$. Set ${\bo{Z}} = {[X,Y,\Theta,\Xi]}^T$, ${\bo{P}} = {[P_X,P_Y,P_\Theta,P_\Xi]}^T$, ${\bo{U}} = {[U, \Omega, M]}^T$. Let us define a parameterized system
\begin{align}
&\left\{
  \begin{aligned}
  \label{eq:para_sys}  
     &\frac{\mathrm{d}}{\mathrm{d}\tau}{{\bo{Z}}}(\tau) = -\bar{\bo{f}}({\bo{Z}}(\tau), {\bo{U}}(\tau)),            \\
     &\frac{\rm{d}}{\rm{d}\tau}{\bo{P}}(\tau)= \frac{\partial \mathcal{H}}{\partial {\bo{Z}}}, \\ 
     &\frac{\mathrm{d}}{\rm{d}\tau}{\bo{U}}(\tau) = -{\Big(\frac{\partial \bo{g}}{\partial {\bo{U}}}\Big)}^{-1}\Big(-\frac{\partial \bo{g}}{\partial {{\bo{Z}}}}\bar{\bo{f}} +\frac{\partial \bo{g}}{\partial {{\bo{P}}}} \frac{\partial \mathcal{H}}{\partial {{\bo{Z}}}}+\frac{\partial \bo{g}}{\partial \tau}\Big).
  \end{aligned}
\right.
\end{align}
The initial value of the variables are given as
\begin{align}
    x(t_f) &= 0, y(t_f) = 0,\\
    p_\theta(t_f) &= 0, p_\xi(t_f) = 0.
\end{align}
Since the kinematics remain invariant under coordinate rotation, we can set $\theta(t_f) = 0$ in the parameterized system. The the terminal condition $\sigma(t_f) = 0$ naturally arises to guarantee that $r(t_f) = 0$, while maintaining a finite control input. Then, we can obtain the value of $\xi(t_f)$ from Eq.~(\ref{eq:satfun}) as
\begin{align}
    \xi(t_f) = -\log (1-\cos\sigma_M).
\end{align}
Let us set $\alpha$ and $\beta$ as
\begin{align}
    \alpha &= \sqrt{p_x^2(t_f) + p_y^2(t_f)},\\
    \beta &= \atantwo\Big(p_y(t_f), p_x(t_f)\Big).
\end{align}
Then, the initial conditions of Eq.~(\ref{eq:para_sys}) are
\begin{align}
  \label{eq:initcond}
&\left\{
\begin{aligned}
  \bo{Z}(0)& = {\Big[0,0,0, -\log (1-\cos\sigma_M)\Big]}^T, \\
  \bo{P}(0) & = {\Big[\alpha\cos\beta, \alpha\sin\beta,0, 0\Big]}^T,     \\
\bo{U}(0) &= \left\{ \bo{U} \in \mathbb{R} \ \big| \ \bo{g}\left( {\bo{Z}}(0), {\bo{P}}(0), \bo{U} \right) = 0 \right\}
\end{aligned}
\right.
\end{align} 

Given an FOV limit \(\sigma_M\), the solution of the parameterized system in Eq.~(\ref{eq:para_sys}) with the initial conditions specified in Eq.~(\ref{eq:initcond}) is determined by the parameters \(\alpha\) and \(\beta\). Thus, the parameterized system in Eq.~(\ref{eq:para_sys}) is fully determined by \(\alpha\), \(\beta\), and \(\tau\).
For clarity, we denote the extremal trajectories and the corresponding control generated by the parameterized system as \(\big[{{{X}}}(\alpha, \beta, \tau), {{{Y}}}(\alpha, \beta, \tau), {{{\Theta}}}(\alpha, \beta, \tau)\big]\) and $U(\alpha, \beta, \tau)$ for \(\tau \in [0, t_f]\), respectively.

\section{Real-Time Solution via NNs}\label{section:4}
This section aims at establishing a framework capable of generating
optimal commands in real time. Since the multilayer feedforward neural networks
can approximate any measurable function to any desired degree of
accuracy\cite{hornik1989multilayer}, 
we can train a feedforward neural network by the dataset of state-control pairs to approximate the mapping from the current state to the corresponding optimal guidance command. To this end, it is crucial to construct the dataset for state-control pairs, and the following subsection shall show how the parameterized system developed in Section.~\ref{section:3} allows to construct the dataset efficiently.
\subsection{Dataset Generation}
Traditionally, dataset generation for optimal control problems relies on direct and indirect numerical methods~\cite{you2021learning,Izzo2021}. However, even with the improved homotopy method, these methods still
suffer from convergence issues~\cite{WangPhysics}. To address these limitations, this work leverages the parameterized system developed in Section~\ref{section:3} to enable efficient dataset construction.

The engagement kinematics expressed in the polar coordinates \((r, \sigma)\) is 
\begin{align}\label{eq:polar}
  \begin{dcases}
    \dot{r}(t) = -\cos\sigma(t),             \\
    \dot{\sigma}(t) = \frac{\sin\sigma(t)}{r(t)} - u,
  \end{dcases}
\end{align}
where $r = \sqrt{x^2 + y^2}$ is the distance between the target and the pursuer, $\sigma$ is the lead angle, as illustrated in Fig.~\ref{fig:frame}. Since the kinematics in Eq.~(\ref{eq:kinematics}) is equivalent to that in Eq.~(\ref{eq:polar}), the optimal control command $u$ can be either determined by the triple $(x,y,\theta)$ in the Cartesian coordinates or by the pair $(r, \sigma)$ in the polar coordinates. We select the polar representation \((r, \sigma)\) for NN inputs due to its dimensional reduction advantages. Then, through the following coordinate transformation, the solution $\left[X(\alpha, \beta, \tau), Y(\alpha, \beta, \tau), \Theta(\alpha, \beta, \tau)\right]$ obtained from the parameterized system can be transformed to $\left[R(\alpha, \beta, \tau), \Sigma(\alpha, \beta, \tau)\right]$ expressed in the polar coordinates $(r, \sigma)$.
\begin{align}
    &\left\{
    \begin{aligned}
    &R(\alpha, \beta, \tau) = \sqrt{X^2(\alpha, \beta, \tau), Y^2(\alpha, \beta, \tau)},\\
    &\Sigma(\alpha, \beta, \tau) = \arccos\left(-\frac{X(\alpha, \beta, \tau)\cos\Theta(\alpha, \beta, \tau) + Y(\alpha, \beta, \tau)\sin\Theta(\alpha, \beta, \tau)}{\sqrt{X^2(\alpha, \beta, \tau) + Y^2(\alpha, \beta, \tau)}} \right)
    \end{aligned}
    \right.
\end{align}

Let us denote by $C(r,\sigma,t_g)$ the optimal guidance command at the current state $(r, \sigma)$ with a feasible time-to-go $t_g > 0$. Through the following lemma, we can reduce the data size for training.
\begin{lemma}\label{le:symmetry}
  Given a current state $(r_c, \sigma_c)$ in the polar coordinates and a feasible time-to-go $t_g>0$, the following equation holds
  \begin{align}
   C(r_c,\sigma_c,t_g) = -C(r_c,-\sigma_c,t_g).
  \end{align}
\end{lemma}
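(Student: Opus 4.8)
The plan is to exploit the mirror symmetry of the engagement kinematics in the polar coordinates $(r,\sigma)$ under the sign flip $\sigma\mapsto-\sigma$, $u\mapsto-u$. Concretely, I would take any extremal (indeed optimal) solution of $\mathcal{P}_\varepsilon$ whose polar trajectory passes through the current state $(r_c,\sigma_c)$ with time-to-go $t_g$ and whose optimal command there equals $C(r_c,\sigma_c,t_g)$, and construct from it a second admissible controlled trajectory by reflecting: set $\tilde r(t)=r(t)$, $\tilde\sigma(t)=-\sigma(t)$, $\tilde u(t)=-u(t)$, together with the mirrored auxiliary variable $\tilde\xi(t)=\xi(t)$ (which is legitimate because $S(\bo z)$ depends on $\sigma$ only through $\cos\sigma$, hence is invariant under $\sigma\mapsto-\sigma$) and $\tilde\omega(t)=\omega(t)$.

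The key steps, in order, are: (i) verify that $(\tilde r,\tilde\sigma)$ with control $\tilde u$ satisfies the polar kinematics in Eq.~(\ref{eq:polar}) — $\dot{\tilde r}=-\cos\tilde\sigma=-\cos\sigma=\dot r$ and $\dot{\tilde\sigma}=-\dot\sigma=-\big(\tfrac{\sin\sigma}{r}-u\big)=\tfrac{\sin\tilde\sigma}{\tilde r}-\tilde u$; (ii) check the boundary and FOV conditions are preserved, namely $\tilde r(t_f)=0$, $\tilde\sigma\in[-\sigma_M,\sigma_M]$ (immediate by symmetry of the interval), and the saturation-function relations $S(\bo z)-\psi(\xi)=0$ at the endpoints and the differential constraint $\dot S-\psi'\omega=0$ along the arc, all of which hold because $S$, $\dot S$ and the cost are even in $\sigma$ and in $u$; (iii) observe that the regularized cost $\tfrac12\int_0^{t_f}(u^2+\varepsilon\omega^2)\,\mathrm dt$ is unchanged under the reflection since it depends on $u$ only through $u^2$; and (iv) conclude by optimality that the reflected trajectory is itself optimal for the mirrored problem, so its command at the mirrored state $(r_c,-\sigma_c)$ with the same $t_g$ is $C(r_c,-\sigma_c,t_g)$; but by construction that command is $\tilde u=-u=-C(r_c,\sigma_c,t_g)$, which gives the claimed identity. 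One should also remark that uniqueness of the optimal command at a given feasible $(r,\sigma,t_g)$ — or at least the fact that $C$ is well-defined, which the paper already presupposes in its notation — is what allows the conclusion to be stated as an equality rather than a set inclusion.

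The main obstacle I anticipate is the careful handling of the auxiliary state $\xi$ and the equality constraint machinery of $\mathcal{P}_\varepsilon$: one must confirm that the reflected data still satisfies \emph{all} of the constraints listed in Eq.~(\ref{eq:OCPe}), in particular that $\dot S(\bo z)=\psi'(\xi)\omega$ is still an equality after reflection. Since $\dot S(\bo z)=\big(\tfrac{\sin\sigma}{r}-u\big)\sin\sigma$ from Eq.~(\ref{eq:Sdot}), the product $(\tfrac{\sin\sigma}{r}-u)\sin\sigma$ is manifestly even in the pair $(\sigma,u)\mapsto(-\sigma,-u)$, so $\dot S$ is invariant, and with $\tilde\xi=\xi$, $\tilde\omega=\omega$ the right-hand side $\psi'(\xi)\omega$ is also invariant — so this obstacle dissolves once the evenness is made explicit. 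A secondary subtlety is that the reflected trajectory must be expressed back in Cartesian coordinates to invoke the problem statement verbatim; this is routine, since a reflection of $(r,\sigma)$ corresponds to reflecting $(x,y,\theta)$ about the line of sight, and the cost and dynamics in $\mathcal{P}_\varepsilon$ are equivalent to their polar form as noted around Eq.~(\ref{eq:polar}). Hence the heart of the proof is simply the observation that every ingredient of $\mathcal{P}_\varepsilon$ — dynamics, path constraint, endpoint constraints, and running cost — is invariant under the involution $(\sigma,u)\mapsto(-\sigma,-u)$, and the lemma is the feedback-law consequence of that invariance.
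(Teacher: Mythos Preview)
Your proposal is correct and follows the natural symmetry argument. The paper itself does not give a proof of this lemma at all: it simply states that the result can be derived in the same way as in \cite{wang2022nonlinear} and omits the details, so your argument is in fact more complete than what appears here, and your extra care in checking that the saturation-function machinery (the auxiliary state $\xi$, the constraint $\dot S-\psi'\omega=0$, and the regularized cost) is preserved under the reflection is a genuine addition tailored to the $\mathcal{P}_\varepsilon$ formulation that the cited reference would not have needed.
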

\noindent Lemma.~\ref{le:symmetry} can be derived in the same way as in \cite{wang2022nonlinear}. Thus, we omit the details of the proof here.

In view of Lemma~\ref{le:symmetry}, only the extremal trajectories with
$\sigma\in[0, \sigma_M]$ are needed. That is because the optimal guidance command of any
feasible state with $\sigma(t)\in[-\sigma_M,0]$ can be readily obtained by applying
$C(r(t),\sigma(t),t_g) = -C(r(t),-\sigma(t),t_g)$. Based on Lemma~\ref{le:align}, the propagation of the 
parameterized system should be terminated when the velocity vector of the pursuer is collinear with the target. Denote the maximum impact time by  $T(\alpha, \beta)$, we have
\begin{align}
    T(\alpha, \beta) = \min\{\tau>0 |\cos\Sigma(\alpha, \beta, \tau)=1 \}.
\end{align}
Note that $T(\alpha, \beta)$ may have a large value, leading difficulty in propagating the parameterized system in Eq.~(\ref{eq:para_sys}).
Thus, we set 
\begin{align}
    \hat{T} = \min\{\overbar{T}, T(\alpha, \beta)\},
\end{align}
where $\overbar{T}>0$ is a finite number.
Since the maximum curvature of an extremal trajectory is positively related with $\alpha$, we can set an upper bound $\bar{\alpha}$ for $\alpha$. Note that when $\tau=0$, $\mu = 0$, the system degrades to
the unconstrained optimal control problem. Given $\Theta(0) = 0$, we have
\begin{align}
  \frac{\mathrm{d}^2}{\mathrm{d}\tau^2}{\Theta}(0) = -p_x\sin\Theta(0) + p_y\cos\Theta(0) = \alpha\sin(\beta-\Theta(0)) = \alpha\sin\beta.
\end{align}
This indicates that the trajectories generated with the parameter pairs $(\alpha, \beta)$ and $(\alpha, -\beta)$ are symmetric with respect to the $x$-axis. Thus, we can set $\beta\in[0,\pi]$.
By selecting $(\alpha, \beta)$ in the field $(0,\bar{\alpha}]\times[0,\pi]$ and propagating the parameterized system in Eq.~\ref{eq:para_sys}, a large number of extremal trajectories can be generated. By leveraging the optimality and smoothness of the extremal trajectories, we set the adjustable parameter $\varepsilon=1\times 10^{-4}$. 
Then, we can establish a dataset $\mathcal{D}$ consisting of $(R(\alpha, \beta, \tau),\Sigma(\alpha, \beta, \tau), \tau)$ and $U(\alpha, \beta, \tau)$.

However, due to the finite time $\hat{T}$, the dataset does not contain all
the points of $(r, \sigma, t_g)$ and $C(r, \sigma, t_g)$. By the following lemma introduced in \cite{wang2022nonlinear},
we have that even if $\hat{T}$ is a small positive number, the NN trained by the dataset $\mathcal{D}$ is enough to represent the mapping from $(r, \sigma, t_g)$ to $C(r, \sigma, t_g)$.
\begin{lemma}\label{le:scale}
  Suppose the speed of the pursuer is $\overbar{V}$. Let $(\bar{r}({t}), \bar{\sigma}({t}))$ and $\bar{u}(t)$ for $t\in[0, \bar{t}_f]$ denotes an extremal trajectory and the corresponding control, respectively. Then, for any $t_f \in (0,\bar{t}_f)$, it follows that 
  \begin{align}
      \bar{u}(t) = \frac{\overbar{V}t_f}{\bar{t}_f}C(\frac{\bar{r}(t)t_f}{\overbar{V}\bar{t}_f}, \bar{\sigma}(t), t_f - t\frac{t_f}{\bar{t}_f})
  \end{align}
  % \begin{align}
  %   \overbar{u}({t}) = C(\hat{r} \frac{r_0}{\hat{r}_0}, \hat{\sigma} ,\hat{t}_g \frac{Vr_0}{\hat{r}_0}),
  % \end{align}
\end{lemma}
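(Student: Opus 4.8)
The plan is to establish Lemma~\ref{le:scale} by exploiting the scale invariance of the normalized kinematics in Eq.~(\ref{eq:polar}) together with the quadratic-in-control cost functional. The key observation is that although the paper normalizes the speed to $1$, the engagement geometry is invariant under a simultaneous rescaling of time, range, and control magnitude. Concretely, I would start from an extremal trajectory $(\bar r(\cdot),\bar\sigma(\cdot))$ with control $\bar u(\cdot)$ for a pursuer of speed $\overbar V$ over $[0,\bar t_f]$, and introduce the rescaled quantities $r(s) = \tfrac{t_f}{\overbar V\bar t_f}\,\bar r\!\left(\tfrac{\bar t_f}{t_f}s\right)$, $\sigma(s) = \bar\sigma\!\left(\tfrac{\bar t_f}{t_f}s\right)$, and $u(s) = \tfrac{\bar t_f}{\overbar V t_f}\,\bar u\!\left(\tfrac{\bar t_f}{t_f}s\right)$ for $s\in[0,t_f]$. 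The main computational step is to verify by direct substitution and the chain rule that $(r(\cdot),\sigma(\cdot),u(\cdot))$ satisfies the unit-speed kinematics in Eq.~(\ref{eq:polar}); the factor $\tfrac{\bar t_f}{t_f}$ from differentiating the time argument must cancel against the scale factors in $r$ and $u$, which is exactly why the lead-angle equation $\dot\sigma = \sin\sigma/r - u$ forces $u$ and $r^{-1}$ to scale oppositely while $\dot r = -\cos\sigma$ forces $r$ to scale like time.

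Next I would check that the boundary conditions are preserved: $r(t_f) = \tfrac{t_f}{\overbar V\bar t_f}\bar r(\bar t_f) = 0$ since $\bar r(\bar t_f)=0$, and the initial angle is unchanged, so $(r(\cdot),\sigma(\cdot))$ is a genuine interception trajectory for the unit-speed pursuer with flight time $t_f$. Then I would argue that this rescaled trajectory is \emph{extremal} for the unit-speed problem: since the map is a bijection on trajectory-control pairs that carries the unit-speed dynamics to the $\overbar V$-speed dynamics and scales the cost $\tfrac12\int u^2$ by a constant positive factor, it carries extremals to extremals (equivalently, one can transport the costates by the dual scaling and verify Eqs.~(\ref{eq:euler})--(\ref{eq:age}) are satisfied, or invoke that the necessary conditions are themselves scale-covariant). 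By the feedback characterization, the extremal control of the unit-speed problem at state $(r(s),\sigma(s))$ with time-to-go $t_f-s$ equals $C(r(s),\sigma(s),t_f-s)$, i.e. $u(s) = C(r(s),\sigma(s),t_f-s)$.

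Finally I would unwind the substitution. Setting $s = \tfrac{t_f}{\bar t_f}t$ so that $\tfrac{\bar t_f}{t_f}s = t$, the relation $u(s) = C(r(s),\sigma(s),t_f-s)$ becomes $\tfrac{\bar t_f}{\overbar V t_f}\bar u(t) = C\!\left(\tfrac{t_f}{\overbar V\bar t_f}\bar r(t),\,\bar\sigma(t),\,t_f - \tfrac{t_f}{\bar t_f}t\right)$, and multiplying through by $\tfrac{\overbar V t_f}{\bar t_f}$ gives precisely the claimed identity $\bar u(t) = \tfrac{\overbar V t_f}{\bar t_f}\,C\!\left(\tfrac{\bar r(t)t_f}{\overbar V\bar t_f},\,\bar\sigma(t),\,t_f - t\tfrac{t_f}{\bar t_f}\right)$.

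The step I expect to be the main obstacle is the rigorous justification that the rescaling sends extremals to extremals rather than merely sending feasible trajectories to feasible trajectories --- one must either carefully track how the FOV saturation variable $\xi$, the multiplier $\mu$, and all four costates transform under the scaling and confirm every equation in Eqs.~(\ref{eq:euler})--(\ref{eq:trans}) is invariant, or else appeal to a uniqueness/regularity property of the feedback law $C$. A secondary subtlety is that $C$ is defined as \emph{the} optimal command, so one should note that the value function and hence $C$ inherit the same homogeneity, making the feedback identity well-posed even though the lemma is stated for a generic extremal (not necessarily the global optimum); this is handled exactly as in~\cite{wang2022nonlinear}, which is why the proof can be compressed.
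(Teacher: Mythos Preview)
Your scaling argument is correct and matches the paper's intended approach: the paper omits the proof entirely, stating only that it is similar to that in~\cite{wang2022nonlinear}, and the time-and-range reparametrization you carry out is precisely that argument. The two subtleties you flag (that the rescaling must send \emph{extremals} to extremals, and the well-posedness of the feedback map $C$) are indeed the only points requiring care, and they are handled in the cited reference just as you indicate.
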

\noindent The proof of Lemma.~\ref{le:scale} is similar to that in \cite{wang2022nonlinear}, thus it is omitted here.

\subsection{Optimal Guidance via NNs}
To balance the overfitting issue and the performance of the NN, the
architecture of it is set as two hidden layers with 20 neurons in each layer. And the
activation function is chosen as the hyperbolic tangent. In the training procedure, 
the input and output are $(r(\tau), \sigma(\tau),\tau)$ and $u(\tau)$, respectively. 
Both of them are scaled to the range of $[-1,1]$ to improve the training efficiency.

Figure.~\ref{fig:nnArch} shows the usage of the NN which is denoted by
$\mathcal{N}$. In every guidance cycle, the states measured in the
Cartesian coordinates is transformed to that expressed in the polar coordinates. The input of $\mathcal{N}$ is a combination of the distance $r$, the lead angle $\sigma$ and the time-to-go $t_g$.
Then the optimal guidance command $u$ is generated in real
time by $\mathcal{N}$. 
% Considering Lemma~\ref{le:scale}, the optimal guidance command can be properly scaled to the scenarios where the pursuer has a constant speed $V$.
\begin{figure}[thpb]
  \centering
  \includegraphics[scale=0.7]{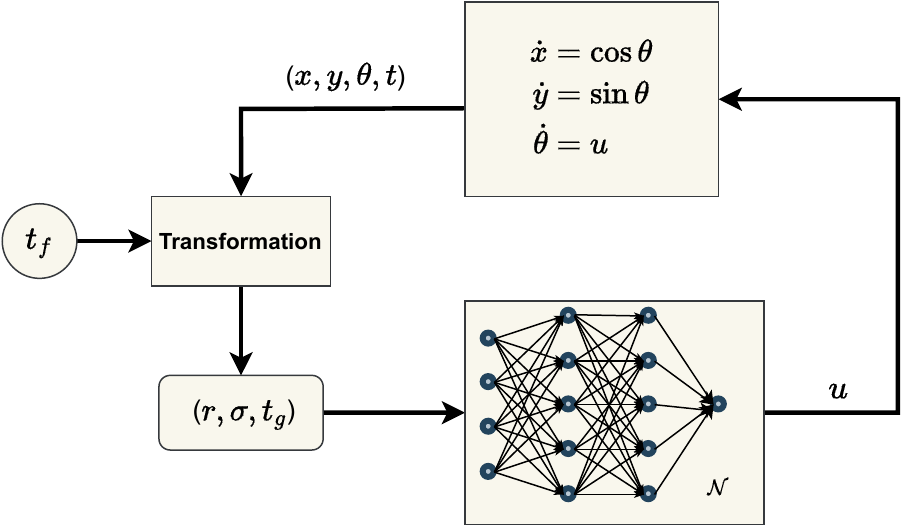}
  \caption{A schematic view of the optimal guidance.}
  \label{fig:nnArch}
\end{figure}

\section{Simulations}\label{section:5}
This section presents several numerical simulations to demonstrate the performance of the proposed guidance law. 
The guidance command generation takes approximately \SI{0.01}{\milli\second} for a given input tuple $(r, \sigma, t_g)$ on a laptop equipped with an Intel Core i5-13500H CPU. These results demonstrate that the proposed method is capable of generating optimal guidance commands in real time.
\subsection*{Case A: Numerical Examples with Different FOV Limits}
In this case, the pursuer is initialized at a position of \((0, 10000)\)~\si{m} with an initial heading angle of \(30^\circ\). The target is located at the origin. The simulation considers three FOV limits: \(30^\circ\), \(45^\circ\), and \(60^\circ\).
As shown in Fig.~\ref{fig:trajOpt}, both the GPOPS-II optimization results and the proposed guidance law generate nearly identical pursuer trajectories.
The FOV constraints are satisfied, as demonstrated in Fig.~\ref{fig:fovOpt} and Table~\ref{table:optsig}. Note that the specified and actual maximum of the lead angle are denoted by $\sigma_M$ and ${\sigma}_M'$, respectively. Table~\ref{table:optsig} also lists the control effort \(J\), the desired impact time, and the actual impact time. By incorporating the saturation function introduced in
Sec~\ref{section:3}, the guidance command profiles of the proposed guidance law are smooth, as shown in Fig.~\ref{fig:controlOpt}.
\begin{table}[ht]
  \caption{Constraints and performance index in Case A.} % title of Table
  \centering % used for centering table
  \begin{tabular}{l c c c} % centered columns (4 columns)
    \hline\hline %inserts double horizontal lines
    Parameters                          & Case 1             & Case 2             & Case 3  \\ [0.5ex] % inserts table \
    \hline % inserts single horizontal 
    $\sigma_M$, \si{deg}                & 30                 & 45                 & 60      \\ % adds vertical space
    ${\sigma}_M'$, \si{deg}          & 30.0000            & 44.9867            & 59.9979 \\
    Desired Impact Time, \si{s}         & 44                 & 50                 & 60      \\
    Actual Impact Time, \si{s}          & 44.0001            & 50.0002            & 60.0000 \\
    % $J^{\ast}$, \si{m^2/s^3}  & $1.6303\times10^3$ & $3.7612\times10^3$
                                        % & $5.9229\times10^3$                                \\
    $J$, \si{m^2/s^3} & $1.6458\times10^3$ & $3.7692\times10^3$
                                        & $5.9392\times10^3$                                \\
    % $\delta J$, \%                      & 0.3357             & 0.4320             & 0.2756  \\[1ex]
    \hline\hline  %inserts single line
  \end{tabular}\label{table:optsig} % is used to refer this table in the text
\end{table}

\begin{figure}
  \centering
  \begin{subfigure}[b]{0.45\textwidth}
    \centering
    \includegraphics[width=\textwidth]{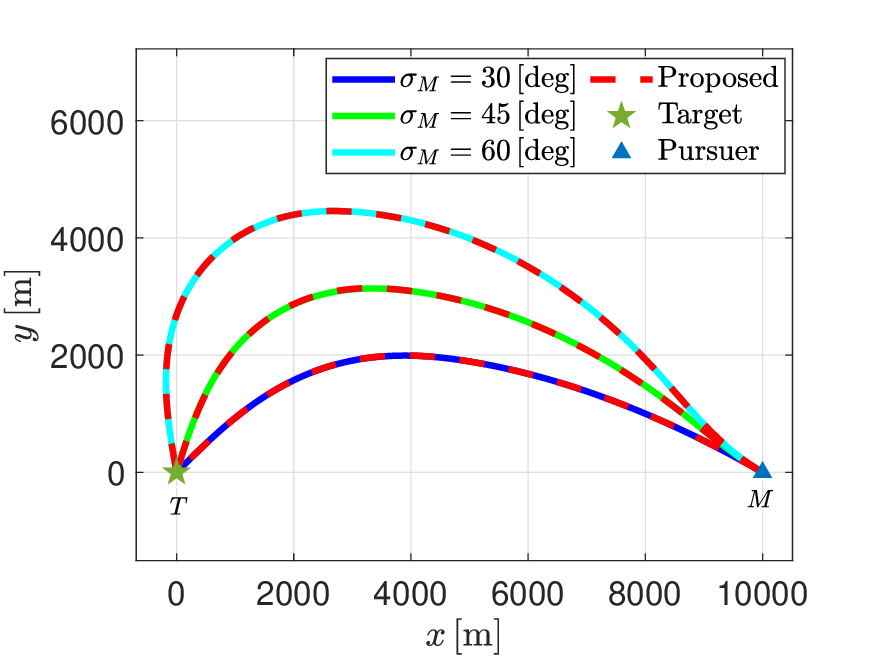}
    \caption{Pursuer Trajectories}\label{fig:trajOpt}
  \end{subfigure}
  \quad
  \begin{subfigure}[b]{0.45\textwidth}
    \centering
    \includegraphics[width=\textwidth]{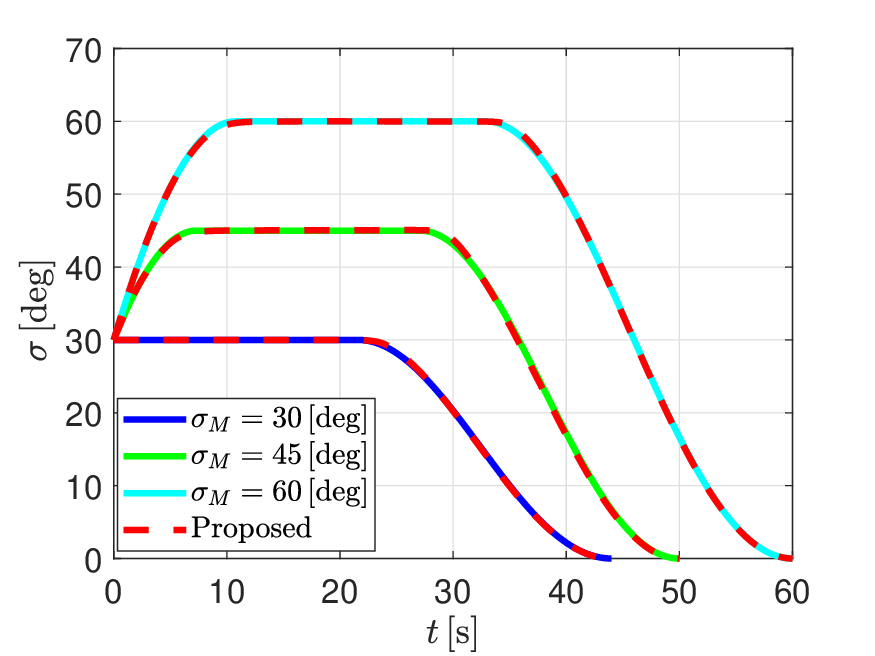}
    \caption{Lead Angle Profiles}\label{fig:fovOpt}
  \end{subfigure}
  \quad
  \begin{subfigure}[b]{0.45\textwidth}
    \centering
    \includegraphics[width=\textwidth]{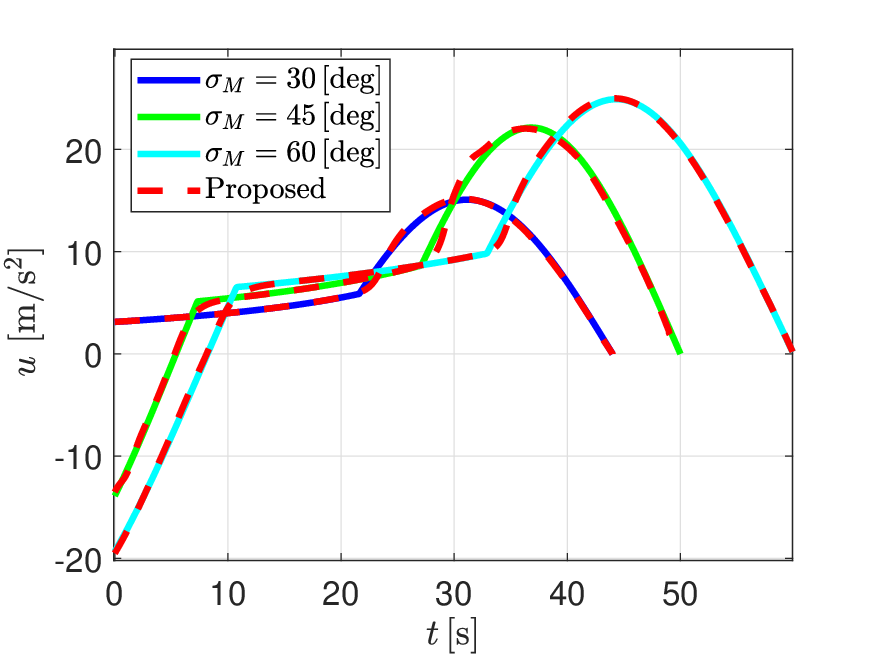}
    \caption{Control Profiles}\label{fig:controlOpt}
  \end{subfigure}
  \caption{Optimality validation with FOV limits of 30, 45 and 60 deg.}
\label{fig:optFov}
\end{figure}

\subsection*{Case B: Numerical Examples with Different Impact time}
In this subsection, the performance of the proposed guidance law is evaluated
in the scenario with a fixed FOV limit and different impact times. The results illustrated in Fig.~\ref{fig:trajTg}-\ref{fig:controlTg} show that the trajectories and guidance command profiles of the proposed guidance law nearly overlap with the optimal solutions. Fig.~\ref{fig:fovTg} demonstrates that the lead angles remain within
the FOV limit with different impact times. 
Moreover, the deviations between the actual and desired impact times are negligible, as shown in Table~\ref{table:optTg}.

\begin{table}[ht]
  \caption{Constraints and performance index in Case B.} % title of Table
  \centering % used for centering table
  \begin{tabular}{l c c c} % centered columns (4 columns)
    \hline\hline %inserts double horizontal lines
    Parameters                          & Case 1             & Case 2             & Case 3  \\ [0.5ex] % inserts table \
    \hline % inserts single horizontal 
    $\sigma_M$, \si{deg}                & 60                 & 60                 & 60      \\ % adds vertical space
    ${\sigma}_M'$, \si{deg}          & 54.3235            & 59.9742            & 59.9979 \\
    Desired Impact Time, \si{s}         & 50                 & 55                 & 60      \\
    Actual Impact Time, \si{s}          & 50.0000            & 55.0000            & 60.0000 \\
    % $J^{\ast}$, \si{m^2/s^3}  & $3.1371\times10^3$ & $4.3211\times10^3$
    %                                     & $5.9229\times10^3$                                \\
    $J$, \si{m^2/s^3} & $3.1409\times10^3$ & $4.3303\times10^3$
                                        & $5.9392\times10^3$                                \\
    % $\delta J$, \%                      & 0.1225             & 0.2128             & 0.2756  \\[1ex]
    \hline\hline  %inserts single line
  \end{tabular}\label{table:optTg} % is used to refer this table in the text
\end{table}

\begin{figure}
  \centering
  \begin{subfigure}[b]{0.45\textwidth}
    \centering
    \includegraphics[width=\textwidth]{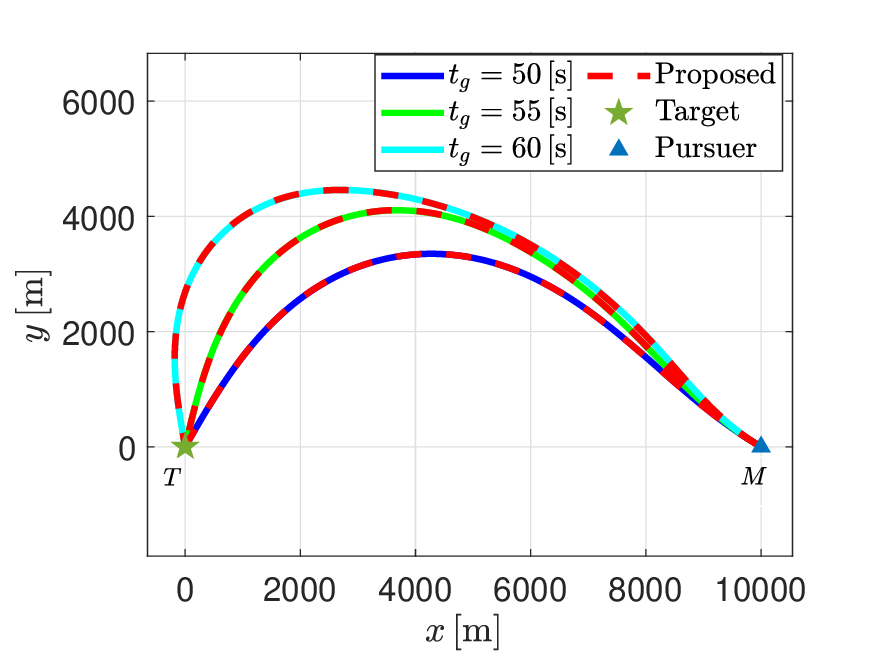}
    \caption{Pursuer Trajectories}\label{fig:trajTg}
  \end{subfigure}
  \quad
  \begin{subfigure}[b]{0.45\textwidth}
    \centering
    \includegraphics[width=\textwidth]{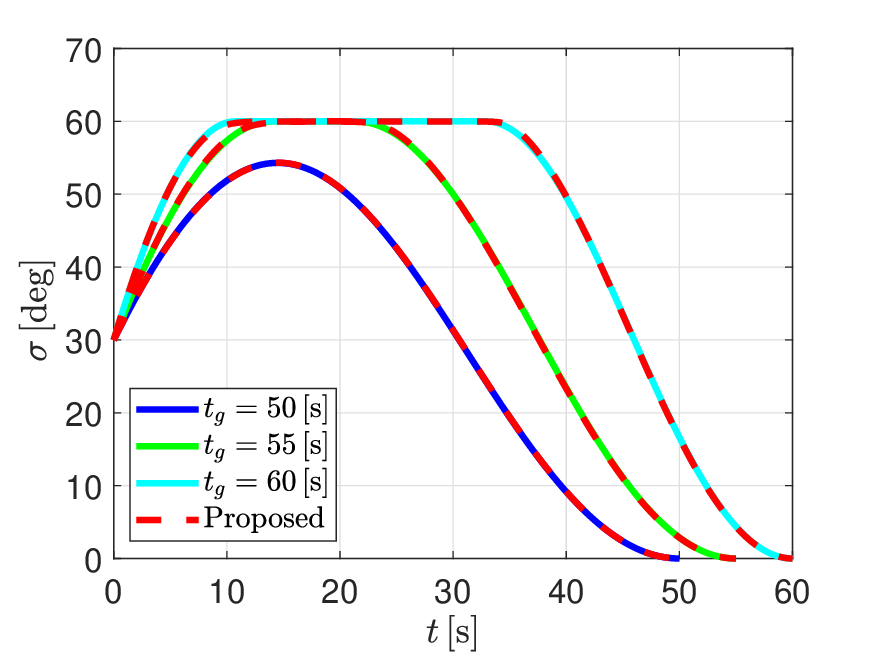}
    \caption{Lead Angle Profiles}\label{fig:fovTg}
  \end{subfigure}
  \quad
  \begin{subfigure}[b]{0.45\textwidth}
    \centering
    \includegraphics[width=\textwidth]{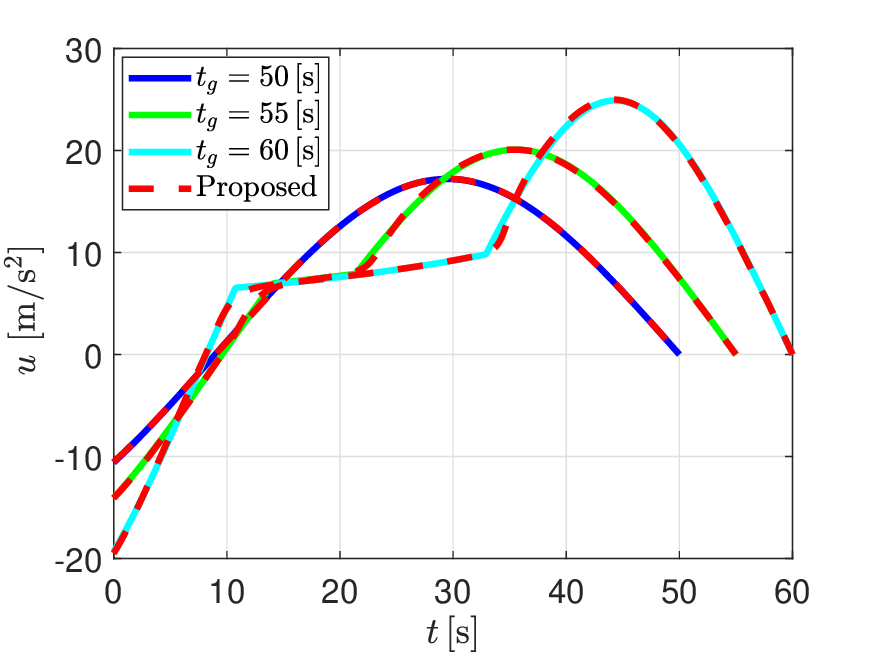}
    \caption{Control Profiles}\label{fig:controlTg}
  \end{subfigure}
  \caption{Optimality validation with FOV limits of 60 deg and tg of 50, 55, and 60 s.}\label{fig:optTg}
\end{figure}

\subsection*{Case C: Performance Comparison with Some Existing Guidance Laws}

To further analyze the proposed guidance law, its performance is compared with several existing ITCG laws with FOV limits, namely the PN-based and SMC-based guidance laws developed in~\cite{Dong2022article} and~\cite{2018Nonsingular}. The maximum guidance command is set to \SI{100}{m/s^2}.

The trajectories of the pursuers under the three different guidance laws, shown in Fig.~\ref{fig:traj60}, indicate that successful interception was achieved. Figure~\ref{fig:fov60} demonstrates that the FOV constraint was satisfied in all cases under the respective guidance laws. The guidance commands are depicted in Fig.~\ref{fig:control60}, and the corresponding control efforts are listed in Table~\ref{table:optcom}.
The PN-based method in \cite{Dong2022article} exhibits the highest control effort and generates a large initial command due to a significant time-to-go error. In contrast, the SMC-based method in \cite{2018Nonsingular} produces a smaller maximum value of the guidance command profile.
However, its guidance command profile in Fig.~\ref{fig:control60} shows that an 
abrupt command change occurred when exiting the FOV constraint due to the 
immediate change of the $\dot{\sigma}$, which may cause instability to the 
autopilot system. 

The proposed guidance law demonstrates the lowest control effort among the three methods. Moreover, its guidance command profile maintains a modest maximum value and remains smooth throughout the entire flight, thereby ensuring stable and efficient performance.
\begin{table}[ht]
  \caption{Constraints and performance index in Case C.} % title of Table
  \centering % used for centering table
  \begin{tabular}{l c c c} % centered columns (4 columns)
    \hline\hline %inserts double horizontal lines
    Parameters                  & Case 1             & Case 2             & Case 3   \\ [0.5ex] % inserts table \
    \hline % inserts single horizontal 
    Guidance Law                &PN-based~\cite{Dong2022article} & SMC-based\cite{2018Nonsingular}            & Proposed \\ % adds vertical space
    $\sigma_M$, \si{deg}        & 60                 & 60                 & 60       \\ % adds vertical space
    ${\sigma}_M'$, \si{deg}  & 60.0000            & 60.0000            & 59.9979  \\
    Desired Impact Time, \si{s} & 60                 & 60                 & 60       \\
    Actual Impact Time, \si{s}  & 59.9961            & 59.9960            & 60.0000  \\
    $J$, \si{m^2/s^3}           
    & $1.0390\times10^4$ & $6.2226\times10^3$
    & $5.9392\times10^3$                                 \\
    \hline\hline  %inserts single line
  \end{tabular}\label{table:optcom} % is used to refer this table in the text
\end{table}

\begin{figure}
  \centering
  \begin{subfigure}[b]{0.45\textwidth}
    \centering
    \includegraphics[width=\textwidth]{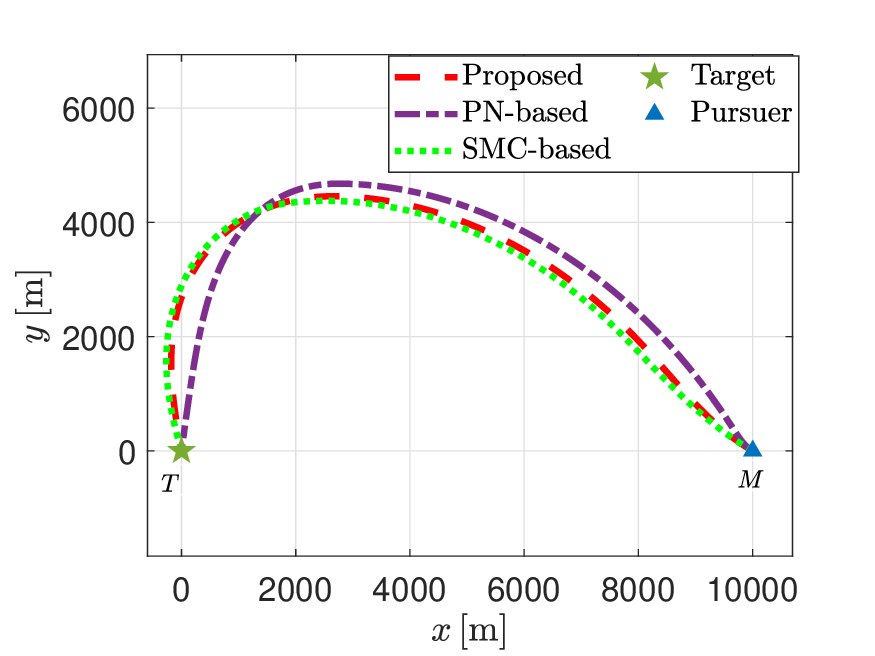}
    \caption{Pursuer Trajectories}\label{fig:traj60}
  \end{subfigure}
  \quad
  \begin{subfigure}[b]{0.45\textwidth}
    \centering
    \includegraphics[width=\textwidth]{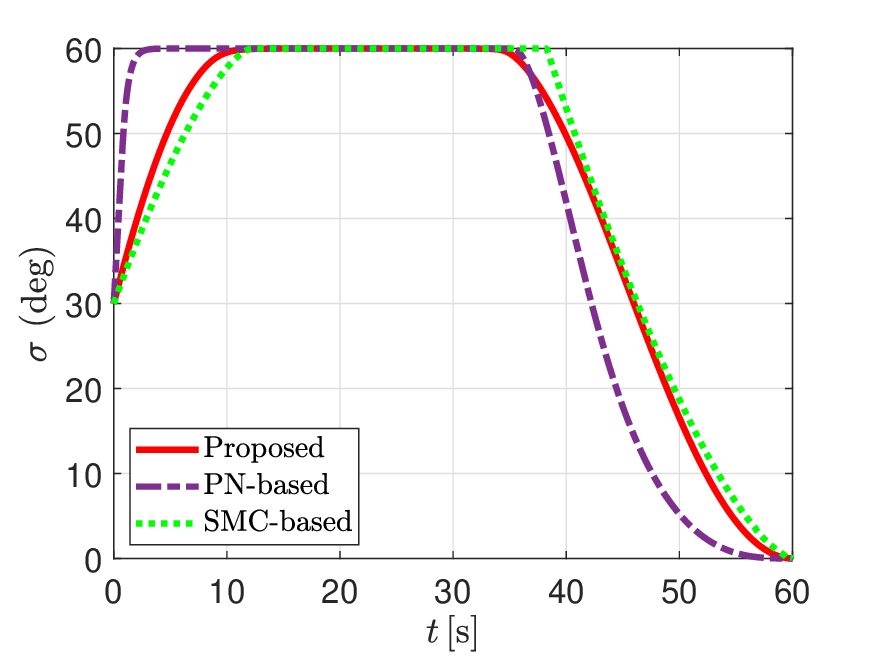}
    \caption{Lead Angle Profiles}\label{fig:fov60}
  \end{subfigure}
  \quad
  \begin{subfigure}[b]{0.45\textwidth}
    \centering
    \includegraphics[width=\textwidth]{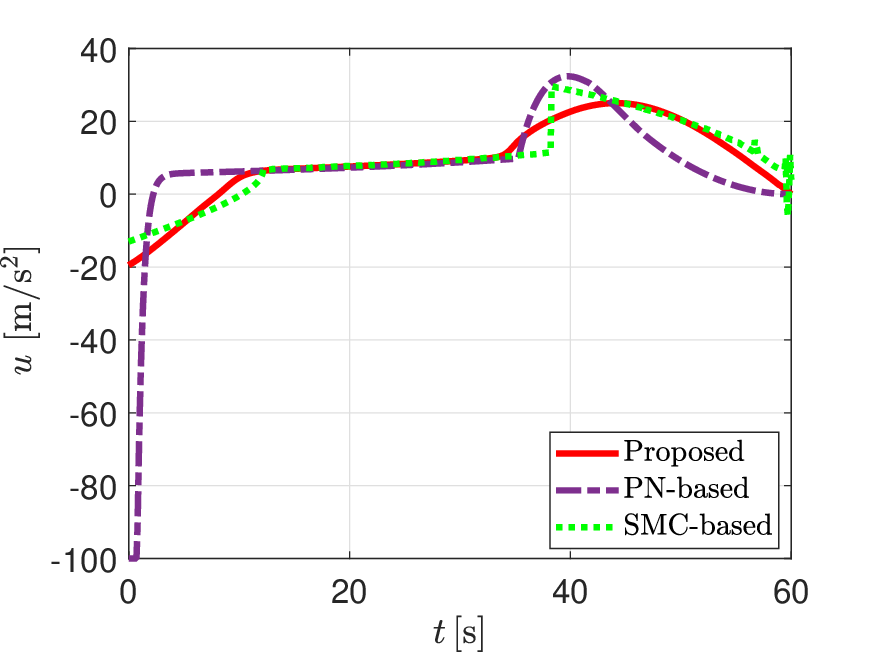}
    \caption{Control Profiles}\label{fig:control60}
  \end{subfigure}
  \caption{Performance comparison with the PN-based method~\cite{Dong2022article} and SMC-based method~\cite{2018Nonsingular}.}\label{fig:benchmark}
\end{figure}

% \subsection*{Case D: Moving Target Interception}

\section{Conclusions}\label{section:6}
In this paper, an optimal ITCG law with the FOV limit is proposed. First, the original inequality-constrained nonlinear OCP is converted 
into an equality-constrained OCP with a saturation function. 
By applying the PMP, the necessary conditions are derived and a parameterized system is established, ensuring that its solution space matches that of the nonlinear OCP with the FOV constraint.
A dataset containing the state, time-to-go and optimal guidance command is generated by simply propagating this parameterized system.
This dataset enables the training of an NN which maps the state and time-to-go to the optimal guidance command.
The trained NN can generate an optimal command within 0.1 ms without violating the FOV constraint.
Several numerical simulations have shown that the proposed guidance law is nearly optimal, which indicates superior performance than some existing methods. 
Future research may extend this methodology to three-dimensional engagement scenarios, further exploring its potential applications and improvements.

\section*{Declaration of Competing Interest}
The authors declare that they have no known competing financial interests or personal relationships that could have appeared to influence the work reported in this paper.

\section*{Acknowledgements}
This research was supported by the National Natural Science Foundation of China
under grant No. 62088101.
% \begin{appendix}
% \renewcommand{\thefigure}{\Alph{section}\arabic{figure}} 
% \renewcommand{\theequation}{A\arabic{equation}}
% \setcounter{equation}{0}  % reset counter 
\appendix
\section{Proof of Lemmas}\label{appendix:A}

\begin{proof}[Proof of Lemma~\ref{le:align}:]
By contradiction, assume that we have an extremal trajectory start from a point P to the origin and is denoted by $\Gamma_{PO} = {x(t), y(t), \theta(t)}$ with $t\in[0, t_f]$. There exists a point $A$ at time $t_{col} \in (0, t_f)$ such that the velocity vector is collinear with the LOS. Let $\Gamma_{PA}$ be the sub-trajectory of $\Gamma_{PO}$ from P to A, and $\Gamma_{AO}$ be the sub-trajectory of $\Gamma_{PO}$ from A to the origin. Denote by $u_A$ the corresponding extremal control of $\Gamma_{PO}$. Set $u_A^{\prime} = -u_A$, then a symmetric trajectory $\Gamma^{\prime}_{AO}$ is obtained. Let $\Gamma_{PA}$ be the sub-trajectory of $\Gamma_{PO}$ from P to A, and $\Gamma^{\prime}_{PO}$ be the concatenation of $\Gamma_{PA}$ and $\Gamma^{\prime}_{AO}$. Then, it is apparent that 
 the cost of $\Gamma^{\prime}_{PO}$ equals to that of $\Gamma_{PO}$. In general cases, $u_{col}\neq 0$, then the control profile of $\Gamma^{\prime}_{PO}$ is discontinuous at point A. This violates the necessary condition where the control is continuous. Then, $\Gamma^{\prime}_{PO}$ is not an extremal trajectory, so as $\Gamma_{PO}$. This completes the proof.
\end{proof}

\begin{proof}[Proof of Lemma~\ref{lemma:jacobian}:]
  To prove the non-singularity of the Jacobian $\partial \bo{g}/\partial \bar{\bo{u}}$, we proceed as follows:
  \begin{align}
       \frac{\partial \bo{g}}{\partial \bar{\bo{u}}} 
       &=  \begin{bmatrix}
       -1 & 0  & -\sin\sigma  \\
       0 & -\varepsilon & -\exp(-\xi) \\
        -\sin\sigma & -\exp(-\xi) & 0
       \end{bmatrix}, 
  \end{align}
  The determinant of $\partial \bo{g}/\partial \bar{\bo{u}}$ is
  \begin{align}
       \det\Big(\frac{\partial \bo{g}}{\partial \bar{\bo{u}}}\Big) &= \exp(-2\xi) + \varepsilon \sin^2\sigma > 0.
  \end{align}
  Therefore, we have that $\partial \bo{g}/\partial \bar{\bo{u}}$ is non-singular, which completes the proof. 
  \end{proof}
% \end{appendix}

%% The Appendices part is started with the command \appendix;
%% appendix sections are then done as normal sections
%% \appendix

%% \section{}
%% \label{}

%% For citations use: 
%%       \citet{<label>} ==> Jones et al. [21]
%%       \citep{<label>} ==> [21]
%%

%% If you have bibdatabase file and want bibtex to generate the
%% bibitems, please use
%%
\bibliographystyle{elsarticle-num-names} 
%%  \bibliography{<your bibdatabase>}

%% else use the following coding to input the bibitems directly in the
%% TeX file.

\bibliography{bib}
% \begin{thebibliography}{}

% %% \bibitem[Author(year)]{label}
% %% Text of bibliographic item

% \bibitem[ ()]{}

% \end{thebibliography}
\end{document}